\newenvironment{eq}{\begin{equation}}{\end{equation}}
\renewcommand{\Ref}[1]{(\ref{#1})}
\newcommand{\Char}{\mathop{\rm char}}
\newcommand{\FF}{\mathbb{F}}
\newcommand{\RR}{\mathbb{R}}
\newcommand{\ZZ}{\mathbb{Z}}
\newcommand{\NN}{\mathbb{N}}
\newcommand{\si}{\sigma}
\newcommand{\al}{\alpha}
\newcommand{\be}{\beta}
\newcommand{\ga}{\gamma}
\newcommand{\de}{\delta}
\newcommand{\De}{\Delta}
\newcommand{\Sym}{{\mathcal S}}
\newcommand{\fr}{\mathop{\rm frac}}       
\title{
    Separating symmetric polynomials over finite fields
    }
\author{
    Artem Lopatin, Pedro Antonio Muniz Martins and Lael Viana Lima
    }
\abstract{The set $S(n)$ of all elementary symmetric polynomials in $n$ variables is a minimal generating set for the algebra of symmetric polynomials in $n$ variables, but over a finite field $\FF_q$ the set $S(n)$ is not a minimal separating set for symmetric polynomials in general. We determine when $S(n)$ is a minimal separating set for the algebra of symmetric polynomials having the least possible number of elements.
    }
\keywords{Finite field, Symmetric group, Symmetric polynomials, Invariant theory, Separating invariants.}
\begin{document}

\VOLUME{33}
\YEAR{2025}
\ISSUE{1}
\NUMBER{5}
\DOI{https://doi.org/10.46298/cm.14627}
\section{Introduction}\label{section_intro}

\subsection{Symmetric polynomials} Assume that $\FF$ is an arbitrary field (finite or infinite) and denote by $\FF_q$ the finite field of order $q$ with the characteristic $p=\Char{\FF_q}$. 

Consider an $n$-dimensional vector space $V$ over a field $\FF$ with a fixed basis, where $n\geq2$. For $v \in V$ let $v_i$ denote the $i^{\rm th}$-coordinate with respect to this basis of $V$. The symmetric group $\Sym_n$ acts on $V$ by permutations of the coordinates with respect to the fixed basis of $V$. Namely, for $\si\in \Sym_n$ and $v=(v_1,\ldots,v_n)\in V$ we have $\si\cdot v=(v_{\si^{-1}(1)},\ldots,v_{\si^{-1}(n)})$. The coordinate ring $\FF[V]  = \FF[ x_1, \ldots, x_n]$ of $V$ is isomorphic to the symmetric algebra $S(V^{\ast})$ over the dual space $V^{\ast}$ with the dual basis $x_1, \ldots ,x_n$ to the fixed basis of $V$. The group $\Sym_n$ acts on the set $\{x_1, \ldots ,x_n\}$ by $\si\cdot x_i=x_{\si(i)}$ and this action is extended to the action of $\Sym_n$ on $\FF[V]$. The algebra of {\it $\Sym_n$-invariants} \[ \FF[V]^{\Sym_n}=\{f\in \FF[V] \,|\, \si\cdot f=f \text{ for all } \si\in\Sym_n\} \] is the algebra of symmetric polynomials. It is well known that the algebra $\FF[V]^{\Sym_n}$ is minimally (with respect to inclusion) generated by the set 
$$S(n)=\{s_t(x_1,\ldots,x_n)\,|\,1\leq t\leq n\}$$
of all elementary symmetric polynomials $s_t(x_1,\ldots,x_n)=\sum_{1\leq i_1< \cdots < i_t\leq n} x_{i_1}\cdots x_{i_t}$.

Any element $f$ of $\FF[V]$ can be considered as a function $f:V\to\FF$. Obviously, any $f\in \FF[V]^{\Sym_n}$ has a constant value over every $\Sym_n$-orbit on $V$. Given a subset $S$ of $\FF[V]^{\Sym_n}$, we say that elements $u,v$ of $V$ {\it are separated by $S$} if there exists an invariant $f\in S$ with \mbox{$f(u)\neq f(v)$}.   If  $u,v\in V$ are separated by $\FF[V]^{\Sym_n}$, then we simply say that they {\it are separated}. A subset $S\subset \FF[V]^{\Sym_n}$ is called {\it separating} if for any $u, v$ from $V$ that are separated we have that they are separated by $S$. We say that a separating set is {\it minimal} if it is minimal with respect to inclusion. Obviously, any generating set is also separating.  Minimal separating sets for different actions of groups were constructed in~\cite{Cavalcante_Lopatin_2020, Domokos_2020, Domokos_2020_add, Lopatin_Ferreira_2023, Kaygorodov_Lopatin_Popov_2018, Kemper_Lopatin_Reimers_2022, Lopatin_Reimers_2021, Lopatin_Zubkov_ANT, Lopatin_2024_IJAC, Lopatin_Martins_2023, Reimers_2020}. 

In the case of an algebraically closed field $\FF$ as well as in the case of $\FF=\RR$ the set $S(n)$ is a minimal separating set for $\FF[V]^{\Sym_n}$ having the least possible number of elements. On the other hand, over a finite field a minimal separating set for the algebra of symmetric polynomials is not known in general. For every $n\in \NN$ denote
$$[n]_q = \{j p^k \,|\,1\leq j< q,\; k\in \NN_0,\; j p^k \leq n \} \text{ and }$$
$$S_q(n) = \{s_t(x_1,\ldots,x_n)\,|\, t\in [n]_q\},$$
where $\NN_0=\NN\sqcup \{0\}$. 
In 1964 Aberth~\cite{Aberth_1964} established that $S_p(n)$ is a separating set for $\FF_p[V]^{\Sym_n}$ for a prime $p$. In~\cite{Kemper_Lopatin_Reimers_2022} it was proven that the set $S_2(n)$ is a minimal separating set for $\FF_2[V]^{\Sym_n}$ having the least possible number of elements.  Recently, Domokos and Mikl\'osi~\cite{Domokos_Miklosi_2023} extended the result of Aberth to the case of an arbitrary finite field. Namely, they proved that $S_q(n)$ is a separating set for $\FF_q[V]^{\Sym_n}$. Nevertheless, the set $S(n)$ is a minimal separating set for $\FF_q[V]^{\Sym_n}$ in some cases. 

\subsection{Results}
In Theorem~\ref{theo_min_sep} and Corollary~\ref{corArt} we prove that $S(n)$ is a minimal separating set for $\FF_q[V]^{\Sym_n}$ having the least possible number of elements if and only if $n\leq \chi_q$, where $\chi_q$ is defined by formula~(\ref{eq_zeta}). The explicit values of $\chi_q$ for $q\leq 10^4$ are given in Remark~\ref{rem1}. Since $\chi_{q} \ge \lfloor \ln (\ln q) \rfloor$ by Theorem~\ref{theo2}, for every $n\geq2$ there exists $q$ such that $S(n)$ is a minimal separating set for $\FF_q[V]^{\Sym_n}$ having the least possible number of elements (see Corollary~\ref{cor3}). In Proposition~\ref{prop_F3} we determine when the separating set $S_3(n)$ for $\FF_3[V]^{\Sym_n}$ has the least possible number of elements.

\subsection{Auxiliaries}
Since the number of $\Sym_n$-orbits on $V$ is the binomial coefficient $\binom{n+q-1}{q-1}$, Theorem 1.1 of~\cite{Kemper_Lopatin_Reimers_2022} implies that the least possible number of elements of a separating set for $\FF_q[V]^{\Sym_n}$ is 
\begin{eq}\label{eq_ga}
\ga=\ga_q(n)= \Big\lceil \log_{q} \frac{(n+q-1)\cdot \, \ldots \, \cdot (n+1)}{(q-1)!} \Big\rceil
\end{eq}

Consider some properties of the floor and the ceiling functions. Obviously, for $x\in \RR$ and $n \in \ZZ$ we have  \[ \lfloor x+n \rfloor= \lfloor x \rfloor +n, \lceil x+n \rceil = \lceil x \rceil +n , \fr(x+n) = \fr(x) \text{, and }-\lfloor x \rfloor = \lceil -x \rceil, \] where $\fr(x)$ stands for the fractional part of $x$, i.e., $x=\lfloor x \rfloor+\fr(x)$.

\begin{remark}\label{remark1} For $a,b\in\RR$ with $b \not\in \ZZ$ we have 

\begin{enumerate}
\item[(a)]$$\lfloor 2\, a \rfloor = \left\{
\begin{array}{cc}
2 \lfloor a \rfloor + 1 & \text{ if }\quad \fr(a)\geq 1/2\\\
2 \lfloor a \rfloor &  \text{ if }\quad \fr(a)< 1/2\\
\end{array}
\right.;$$

\item[(b)] $$\lfloor a-b \rfloor  = \left\{
\begin{array}{cc}
 \lfloor a \rfloor+ \lfloor -b \rfloor +1 & \text{ if }\quad \fr(a)\geq\fr(b)\\
\lfloor a \rfloor+ \lfloor -b \rfloor    &  \text{ if }\quad \fr(a)<\fr(b)\\
\end{array}
\right..$$
\end{enumerate}
\end{remark}

\section{The case of \texorpdfstring{$\FF_3$}{F3}}\label{section_F3}

For short, we denote 
$$a_{r}=3^{\frac{r}{2}} \text{ and } b_{r}=\frac{-3+\sqrt{8\cdot 3^r +1}}{2}$$ 
for $r\in\NN_0$. Note that 
\begin{eq}\label{claim3}
a_{r}<b_{r}<a_{r+1} \;\; \text{for all} \;\; r\geq 3.
\end{eq}%

\begin{lemma}\label{lemma1}
For every $n \geq 1$ we have
$$2 \lfloor \log_{3}n \rfloor= \lfloor 2\, \log_{3} n \rfloor + \al,$$ 
where
\begin{enumerate}
\item[$\bullet$] $\al=0$, if $n \in [a_{2r}, a_{2r+1})$ for some $r\in\NN_0;$
    
\item[$\bullet$] $\al=-1$, if $n \in [a_{2r+1}, a_{2r+2})$ for some $r\in\NN_0$.
\end{enumerate}
\end{lemma}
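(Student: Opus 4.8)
The plan is to reduce the statement about $2\fl{\log_3 n}$ to a direct application of Remark~\ref{remark1}(a) with $a = \log_3 n$, and then translate the condition $\fr(\log_3 n) \geq 1/2$ versus $\fr(\log_3 n) < 1/2$ into the interval conditions stated in the lemma. First I would observe that for every integer $n \geq 1$ there is a unique $m \in \NN_0$ with $n \in [3^m, 3^{m+1})$, so $\fl{\log_3 n} = m$, and $\fr(\log_3 n) = \log_3 n - m = \log_3(n/3^m) \in [0,1)$. Then $\fr(\log_3 n) \geq 1/2$ holds exactly when $n/3^m \geq 3^{1/2}$, i.e.\ when $n \geq 3^{m+1/2} = a_{2m+1}$; since $n < 3^{m+1} = a_{2m+2}$ automatically, this says $n \in [a_{2m+1}, a_{2m+2})$. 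Symmetrically, $\fr(\log_3 n) < 1/2$ holds exactly when $n \in [3^m, a_{2m+1}) = [a_{2m}, a_{2m+1})$. Note that $3^{1/2}$ is irrational, so $\log_3 n$ is never exactly a half-integer for integer $n$, which justifies the strict/non-strict split but is not strictly needed since Remark~\ref{remark1}(a) covers both cases.

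With this dictionary in hand, the two bullets follow immediately. In the first case, $n \in [a_{2r}, a_{2r+1})$ for some $r \in \NN_0$ forces $m = r$ (because $[a_{2r}, a_{2r+1}) \subseteq [3^r, 3^{r+1})$) and $\fr(\log_3 n) < 1/2$, so Remark~\ref{remark1}(a) gives $\fl{2\log_3 n} = 2\fl{\log_3 n} = 2m$, i.e.\ $\al = 0$. In the second case, $n \in [a_{2r+1}, a_{2r+2})$ forces $m = r$ and $\fr(\log_3 n) \geq 1/2$, so $\fl{2\log_3 n} = 2\fl{\log_3 n} + 1$, i.e.\ $2\fl{\log_3 n} = \fl{2\log_3 n} - 1$ and $\al = -1$. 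Finally I would note that the intervals $[a_{2r}, a_{2r+1})$ and $[a_{2r+1}, a_{2r+2})$ over all $r \in \NN_0$ partition $[1,\infty)$, since $a_{2r} = 3^r$ and $a_{2r+1} = 3^{r+1/2}$ and $a_{2r+2} = 3^{r+1}$, so the case division is exhaustive and the stated $\al$ is well-defined for every $n \geq 1$.

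The only mild subtlety — and the closest thing to an obstacle — is making sure the half-integer case cannot occur for integer arguments, i.e.\ that $2\log_3 n$ is never an integer unless $n$ is a power of $3$ (in which case $\fr(\log_3 n) = 0 < 1/2$ and we are in the first bullet with the left endpoint $n = a_{2r} = 3^r$). This is immediate from the irrationality of $\sqrt 3$: if $2\log_3 n = \ell \in \ZZ$ then $n^2 = 3^\ell$, forcing $\ell$ even and $n$ a power of $3$. Everything else is the bookkeeping of matching endpoints, which is routine given~\Ref{claim3} is not even needed here.
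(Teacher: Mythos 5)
Your proof is correct and follows essentially the same route as the paper: both reduce the statement to Remark~\ref{remark1}(a) applied to $a=\log_3 n$ and then characterize when $\fr(\log_3 n)<1/2$ in terms of the intervals $[a_{2r},a_{2r+1})$; you do this by the explicit computation $\fr(\log_3 n)=\log_3(n/3^m)$, while the paper argues via monotonicity of the fractional part on $[a_{2r},a_{2r+2})$, a negligible difference.
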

\begin{proof} 
By part (a) of Remark~\ref{remark1}, the statement of the lemma follows from the following claim:
\begin{eq}\label{claim1}
\fr( \log_{3} n )<\frac{1}{2} \;\;\text{ if and only if }\;\; n \in [a_{2r}, a_{2r+1}) \text{ for some } r\in\NN_0. 
\end{eq}

Note  $\fr(\log_{3}n)=0$ if and only if $n=a_{2r}$ for some $r\in \NN_0$. Since $\log_{3}n$ is a strictly increasing function, then $\fr(\log_{3}n)$ is also strictly increasing on every interval $[a_{2r}, a_{2r+2} )$ with $r\in\NN_0$. 
The equality $\fr(\log_{3} a_{2r+1})=1/2$  for every $r\in\NN_0$ concludes the proof of claim~(\ref{claim1}).
\end{proof}

\begin{lemma}\label{Lemma2}
Assume that $n \geq6$. Then for  $f_{1}(x)=\log_{3}x^2$, $f_{2}(x)=\log_{3}\frac{(x+1)(x+2)}{2}$, and $f_{3}(x)=\log_{3} \cfrac{2}{1+\frac{3}{x}+\frac{2}{x^2}}$ we have
$$ 
\lfloor f_{1}(n) \rfloor + \lfloor -f_{2}(n) \rfloor =\lfloor f_{3}(n) \rfloor + \be,$$
where
\begin{enumerate}
\item[$\bullet$] $\be=0$, if $n \in [a_{r}, b_{r})$ for some $r \in \NN;$ 
 
\item[$\bullet$] $\be=-1$, if $n \in [b_{r}, a_{r+1})$ for some $r \in \NN$.
\end{enumerate}
\end{lemma}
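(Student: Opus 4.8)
The plan is to mimic the structure of the proof of Lemma~\ref{lemma1}, reducing the statement to a claim about fractional parts via part~(b) of Remark~\ref{remark1}. First I would observe that $f_1(n) - f_2(n) = f_3(n)$ as functions (this is just the algebraic identity $x^2 \cdot \frac{2}{(x+1)(x+2)} = \frac{2}{1+3/x+2/x^2}$), so that $\lfloor f_1(n)\rfloor + \lfloor -f_2(n)\rfloor = \lfloor f_1(n) - f_2(n)\rfloor + \be = \lfloor f_3(n)\rfloor + \be$ where, by Remark~\ref{remark1}(b), $\be = 0$ when $\fr(f_1(n)) \ge \fr(f_2(n))$ and $\be = -1$ when $\fr(f_1(n)) < \fr(f_2(n))$, provided $f_2(n)\notin\ZZ$. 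So the whole lemma comes down to identifying, for $n\ge 6$, exactly when
\begin{eq}\label{plan_claim}
\fr\!\left(\log_3 n^2\right) \;\ge\; \fr\!\left(\log_3 \tfrac{(n+1)(n+2)}{2}\right),
\end{eq}
and checking the boundary cases where $f_2(n)\in\ZZ$ do not actually occur (or are consistent with the stated answer).

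Next I would analyze when equality or the two inequalities in~\eqref{plan_claim} hold by tracking the ``signed difference'' $g(x) = f_1(x) - f_2(x) = f_3(x) = \log_3\frac{2x^2}{(x+1)(x+2)}$. Since $\frac{2x^2}{(x+1)(x+2)}$ is increasing in $x$ and tends to $2$ as $x\to\infty$, we have $g$ strictly increasing with $g(x) < \log_3 2 < 1$ on the relevant range; in fact for $n\ge 6$ one checks $0 < g(n) < 1$, so $\lfloor f_3(n)\rfloor = \lfloor f_1(n) - f_2(n)\rfloor - (\text{something})$ — more carefully, the key point is that $f_1(n)$ and $f_2(n)$ lie in adjacent or equal integer windows. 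The comparison~\eqref{plan_claim} flips precisely when $f_1(n)$ crosses an integer relative to $f_2(n)$. Writing $f_1(n) = \log_3 n^2 = 2\log_3 n$, the value $\fr(f_1(n))$ is $0$ exactly at $n = a_r = 3^{r/2}$; and $\fr(f_2(n)) = 0$ exactly when $\frac{(n+1)(n+2)}{2} = 3^r$, i.e. when $n = b_r = \frac{-3+\sqrt{8\cdot 3^r+1}}{2}$. Using~\eqref{claim3}, $a_r < b_r < a_{r+1}$ for $r\ge 3$, so on each interval $[a_r, a_{r+1})$ the function $\fr(f_1(n))$ runs from $0$ up toward $1$, while $\fr(f_2(n))$ does likewise but reaches $0$ only at the later point $b_r$; thus~\eqref{plan_claim} holds on $[a_r, b_r)$ (where $\fr f_2$ has ``caught up'' less, so $\fr f_1 \ge \fr f_2$) and fails on $[b_r, a_{r+1})$. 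This is exactly the claimed dichotomy $\be = 0$ versus $\be = -1$.

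The step I expect to be the main obstacle is making the comparison of the two fractional parts rigorous across an interval, rather than just at the endpoints — in particular ruling out the possibility that $\fr(f_2(n))$ ``laps'' $\fr(f_1(n))$ (or vice versa) somewhere strictly inside an interval $[a_r, a_{r+1})$, which would make $\be$ non-constant there. To handle this I would show that neither $f_1(n)$ nor $f_2(n)$ can pass through an integer in the interior of such an interval except at $n = b_r$: for $f_1$ this is immediate since its integer values occur only at the $a_r$; for $f_2$ one must verify that $\frac{(n+1)(n+2)}{2}$ is a power of $3$ for at most one integer $n$ in each range, which follows from strict monotonicity, so there is at most one crossing, located at $b_r$. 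Combined with the fact that $g = f_1 - f_2$ stays within a single unit interval (needing the explicit bound $0 < g(n) < 1$ for $n \ge 6$, a short estimate), this forces $\fr(f_1) - \fr(f_2)$ to change sign only at $n = b_r$, giving the two cases. The boundary case $n = b_r$ itself — where $f_2(n)\in\ZZ$ and Remark~\ref{remark1}(b) does not directly apply — would need a separate one-line check, and since the interval $[b_r, a_{r+1})$ is stated as closed on the left, one confirms that at $n = b_r$ the value $\be = -1$ is the correct one (equivalently, $\lfloor f_1(n)\rfloor + \lfloor -f_2(n)\rfloor = \lfloor f_1(n)\rfloor - f_2(n) = \lfloor f_3(n)\rfloor - 1$ there). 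Finally I would note the hypothesis $n\ge 6$ is used precisely to guarantee $g(n) > 0$ and to place $n$ beyond the small cases where $a_r, b_r$ interleave irregularly (cf.~\eqref{claim3}, valid for $r\ge 3$).
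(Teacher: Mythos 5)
Your overall strategy — reduce the lemma via part (b) of Remark~\ref{remark1} to a comparison of $\fr(f_1(n))$ with $\fr(f_2(n))$, and then use the interleaving $a_r<b_r<a_{r+1}$ from~(\ref{claim3}) to locate where the comparison flips — is exactly the paper's strategy, but your execution contains two sign errors that happen to cancel, so the intermediate claims you would actually have to prove are false. First, Remark~\ref{remark1}(b) gives $\lfloor f_1(n)\rfloor+\lfloor -f_2(n)\rfloor=\lfloor f_1(n)-f_2(n)\rfloor-1$ when $\fr(f_1(n))\geq\fr(f_2(n))$, i.e.\ $\be=-1$ in that case and $\be=0$ when $\fr(f_1(n))<\fr(f_2(n))$ — the opposite of what you state. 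Second, your claim that $\fr(f_1)\geq\fr(f_2)$ on $[a_r,b_r)$ is also backwards: just after $a_r$ the fractional part of $f_1$ has reset to $0$, while $\fr(f_2)$ has \emph{not} yet reset (it resets only at $b_r$) and is therefore large there. Concretely, for $n=9=a_4$ one has $\fr(f_1(9))=0$ but $\fr(f_2(9))=\log_3 55-3\approx 0.65$, so $\fr(f_1)<\fr(f_2)$ on $[a_4,b_4)$. The correct claims (the paper's (\ref{claim2a}) and (\ref{claim2b})) are $\fr(f_1)<\fr(f_2)$ on $[a_r,b_r)$ and $\fr(f_1)\geq\fr(f_2)$ on $[b_r,a_{r+1})$; your heuristic (``$\fr f_2$ has caught up less'' right after $a_r$) is precisely the wrong picture, so the write-up does not establish the dichotomy it needs, even though the two errors cancel in the final assignment of $\be$-values to intervals.

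Your treatment of the case $f_2(n)\in\ZZ$ is also incorrect, and it matters. If some integer $n$ had $f_2(n)=r\in\ZZ$, then $\lfloor -f_2(n)\rfloor=-r$, hence $\lfloor f_1(n)\rfloor+\lfloor -f_2(n)\rfloor=\lfloor f_1(n)-r\rfloor=\lfloor f_3(n)\rfloor$, i.e.\ $\be=0$ — not $\be=-1$ as your ``one-line check'' asserts — and since such an $n$ would lie at the left endpoint of $[b_r,a_{r+1})$, the lemma would actually fail there. So this case must be \emph{excluded}, not confirmed: the paper does this by observing that $(n+1)(n+2)=2\cdot 3^k$ has no integer solutions $n\geq 2$ (the factors $n+1,n+2$ are coprime), so $f_2(n)\notin\ZZ$ and no integer $n$ equals $b_r$. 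On the positive side, your idea of combining the bound $0<f_3(n)<1$ for $n\geq 6$ with the locations of the integer crossings of $f_1$ and $f_2$ (at the $a_r$ and $b_r$, respectively) can be turned into a clean alternative to the paper's derivative/intermediate-value argument: since $0<f_1-f_2<1$ forces $\lfloor f_1\rfloor-\lfloor f_2\rfloor\in\{0,1\}$, one reads off $\lfloor f_2\rfloor=\lfloor f_1\rfloor-1$ on $[a_r,b_r)$ and $\lfloor f_2\rfloor=\lfloor f_1\rfloor$ on $[b_r,a_{r+1})$, which immediately gives the correct fractional-part comparison — but with the directions as you stated them, the proof as written is not correct.
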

\begin{proof} Since $n\geq6$, we have $a_3<n$. Hence, $a_{r}<b_{r}<a_{r+1}$ in case $n \in [a_{r}, a_{r+1})$ by~(\ref{claim3}). It is easy to see that $f_2(n)\not\in\ZZ$, since in case $(n+1)(n+2)=2\cdot 3^k$ for some $k\in\NN$ we obtain a contradiction. 

We assume that $x$ lies in $\RR_{+}=(0,+\infty)$. Since $f_1(x)-f_2(x)=f_3(x)$ and $f_2(n)\not\in\ZZ$, part (b) of Remark~\ref{remark1} implies that the statement of the lemma follows from the next claims:
\begin{eq}\label{claim2a}
\fr( f_{1}(x) )< \fr( f_{2}(x)), \; \text{ if }\;\; 
x \in [a_r, b_r) \text{ for some } r\geq3,
\end{eq}%
\begin{eq}\label{claim2b}
\fr( f_{1}(x) )\ge \fr( f_{2}(x)), \text{ if }
x\in [b_{r}, a_{r+1}) \text{ for some } r\geq3.
\end{eq}%
We have $\fr(f_{1}(x))=0$ if and only if $x=a_r$ for some $r \in \NN_0$. Similarly, $\fr(f_{2}(x))=0$ if and only if $x = b_r$ for some $r \in \NN_0$. Since $f_{1}(x)$ and $f_{2}(x)$ are strictly increasing, then $\fr(f_{1}(x))$ and $\fr(f_{2}(x))$ are also  strictly increasing on intervals $[a_{r},a_{r+1})$ and $[b_{r},b_{r+1})$, respectively, where $r \in \NN_0$.

Since $f'_{1}(x)>f'_{2}(x)$ for all $x\in\RR_{+}$ and $\fr(f_1(b_r))>\fr(f_2(b_r))$, we obtain claim~(\ref{claim2b}). 

Assume that $\fr(f_{1}(x)) \ge \fr(f_{2}(x))$ for some $x \in [a_{r},b_{r})$ with $r\geq3$. Then there exists $x_0 \in [a_{r},b_{r})$ with $\fr(f_{1}(x_0)) = \fr(f_{2}(x_0))$. Since $f_{1}$ increases faster than $f_{2}$, we have $\fr(f_{1}(x)) > \fr(f_{2}(x))$ for all $x\in[x_0,b_r)$. Then the equality $\lim\limits_{x\to b_r^{-}} \fr(f_2(x))=1$ implies that $\lim\limits_{x\to b_r^{-}} \fr(f_1(x))=1$, i.e., $\fr(f_1(b_r))=0;$ a contradiction to inequalities~(\ref{claim3}).  Hence claim~(\ref{claim2a}) is proven.
\end{proof}

\begin{proposition}
\label{prop_F3} Let $\De= \#S_3(n) - \gamma_3(n)$ be the difference between the number of elements of the separating set $S_3(n)$ for $\FF_3[V]^{\Sym_n}$ and the least possible number of elements of a separating set for  $\FF_3[V]^{\Sym_n}$.
Then

\begin{enumerate}
    \item[$\bullet$] $\De=0$ in case $2\leq n\leq 8;$ 

    \item[$\bullet$] \( \begin{aligned}
        & \De= \left\{ 
       \begin{array}{cl}
       0, & \text{ if } n \in  [b_{2r},2\,a_{2r}) \cup [b_{2r+1}, a_{2r+2})  \text{ for some }r\in\NN \\
       1, & \text{ otherwise } \\
       \end{array}
       \right.\\
     & \text{ in case $n\geq 9$.}
     \end{aligned}
     \)
\end{enumerate}
\end{proposition}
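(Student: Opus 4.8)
The plan is to compute $\#S_3(n)$ and $\gamma_3(n)$ separately, subtract, and reduce the statement to the bookkeeping already packaged in Lemmas~\ref{lemma1} and~\ref{Lemma2}. To count $\#S_3(n)$: since $p=\Char\FF_3=3$, the condition $1\leq j<3$ in the definition of $[n]_3$ forces $j\in\{1,2\}$, so $[n]_3=\{3^k\tq 3^k\leq n\}\sqcup\{2\cdot 3^k\tq 2\cdot 3^k\leq n\}$ and hence $\#S_3(n)=(\fl{\log_3 n}+1)+(\fl{\log_3(n/2)}+1)$ for $n\geq2$. Writing $\fl{\log_3(n/2)}=\fl{\log_3 n-\log_3 2}$ and applying part~(b) of Remark~\ref{remark1} with $a=\log_3 n$ and $b=\log_3 2\notin\ZZ$ (here $\fl{-\log_3 2}=-1$ and $\fr(\log_3 2)=\log_3 2$), one obtains $\#S_3(n)=2\fl{\log_3 n}+2-\epsilon$, where $\epsilon\in\{0,1\}$ equals $1$ exactly when $\fr(\log_3 n)<\log_3 2$; equivalently, with $m=\fl{\log_3 n}$, one has $\epsilon=1\iff n\in[a_{2m},2a_{2m})$ and $\epsilon=0\iff n\in[2a_{2m},a_{2m+2})$.

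Next, by~\Ref{eq_ga} we have $\gamma_3(n)=\lceil\log_3\frac{(n+1)(n+2)}{2}\rceil=\lceil f_2(n)\rceil$, hence $-\gamma_3(n)=\fl{-f_2(n)}$. By Lemma~\ref{lemma1}, $\fl{f_1(n)}=\fl{\log_3 n^2}=2\fl{\log_3 n}-\al$ with $\al\in\{0,-1\}$, and Lemma~\ref{Lemma2} gives $\fl{-f_2(n)}=\fl{f_3(n)}+\be-\fl{f_1(n)}$ with $\be\in\{0,-1\}$. Since $1\leq\frac{2n^2}{(n+1)(n+2)}<2$ for all $n\geq4$ (because $n^2-3n-2>0$ there), $\fl{f_3(n)}=0$ whenever $n\geq9$, and therefore
$$\De=\#S_3(n)+\fl{-f_2(n)}=\big(2\fl{\log_3 n}+2-\epsilon\big)+\big(\be+\al-2\fl{\log_3 n}\big)=2-\epsilon+\al+\be$$
for all $n\geq9$, with $\epsilon\in\{0,1\}$ and $\al,\be\in\{0,-1\}$ determined by the explicit interval containing $n$.

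It then remains to locate the three transitions inside one period $[a_{2r},a_{2r+2})=[3^r,3^{r+1})$ with $r\geq2$; these periods are disjoint and cover $[9,\infty)$. Inside such a period, by Lemma~\ref{lemma1} one has $\al=0$ on $[a_{2r},a_{2r+1})$ and $\al=-1$ on $[a_{2r+1},a_{2r+2})$; by Lemma~\ref{Lemma2}, $\be=0$ on $[a_{2r},b_{2r})\cup[a_{2r+1},b_{2r+1})$ and $\be=-1$ on the rest; and $\epsilon=1$ on $[a_{2r},2a_{2r})$, $\epsilon=0$ on $[2a_{2r},a_{2r+2})$. Using~\Ref{claim3} for $b_{2r},b_{2r+1}$, the inequality $\sqrt3<2$ for $a_{2r+1}<2a_{2r}$, and the elementary estimate $(3^r)^2-3\cdot 3^r-1>0$ for $r\geq2$ (which is equivalent to $2a_{2r}<b_{2r+1}$), one checks the ordering $a_{2r}<b_{2r}<a_{2r+1}<2a_{2r}<b_{2r+1}<a_{2r+2}$. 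On the five subintervals this ordering cuts out, the triple $(\epsilon,\al,\be)$ takes the values $(1,0,0),(1,0,-1),(1,-1,0),(0,-1,0),(0,-1,-1)$, so $\De=2-\epsilon+\al+\be$ equals $1,0,0,1,0$ respectively; hence $\De=0$ exactly on $[b_{2r},2a_{2r})\cup[b_{2r+1},a_{2r+2})$ and $\De=1$ elsewhere in the period. Taking the union over $r\geq2$ (the $r=1$ terms involve only values below $9$) gives the formula for $n\geq9$, and the range $2\leq n\leq8$ follows by direct computation, since there $\#S_3(n)$ and $\gamma_3(n)$ are both equal to $2,3,3,3,4,4,4$ for $n=2,\dots,8$, so $\De=0$.

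The main obstacle is the interval bookkeeping of the last paragraph: pinning down the order of the six breakpoints, especially the inequality $2a_{2r}<b_{2r+1}$ — which is what makes the two middle subintervals split as claimed — and ruling out boundary coincidences. For the latter one uses that $a_{2r+1}$ is irrational and that $b_s\notin\ZZ$ for $s\geq2$ (an integral $b_s$ would force $(m+1)(m+2)=2\cdot 3^s$, impossible for $s\geq2$, exactly as in the non-integrality argument for $f_2$ in Lemma~\ref{Lemma2}), so an integer $n$ never lands on $b_{2r},a_{2r+1}$ or $b_{2r+1}$, while $a_{2r}=3^r$, $2a_{2r}=2\cdot 3^r$ and $a_{2r+2}=3^{r+1}$ occur only as left endpoints of half-open intervals.
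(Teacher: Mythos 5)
Your proposal is correct and follows essentially the same route as the paper: you count $\#S_3(n)$ as $2\lfloor\log_3 n\rfloor$ plus a correction depending on whether $n\in[a_{2r},2a_{2r})$, reduce $\De$ to the sum of that correction with the $\al$ and $\be$ of Lemmas~\ref{lemma1} and~\ref{Lemma2} (using $\lfloor f_3(n)\rfloor=0$), and finish by the same five-interval case analysis over the ordering $a_{2r}<b_{2r}<a_{2r+1}<2a_{2r}<b_{2r+1}<a_{2r+2}$, plus direct computation for $2\leq n\leq 8$. The only difference is cosmetic: you verify the insertion $a_{2r+1}<2a_{2r}<b_{2r+1}$ explicitly (via $3^{2r}-3\cdot 3^r-1>0$ for $r\geq 2$), a detail the paper attributes to~(\ref{claim3}) without spelling it out.
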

\begin{proof} It is easy to see that $\#S_3(n) = 2\, \lfloor \log_{3}n \rfloor + \de$, where 
\begin{enumerate}
\item[$\bullet$] $\de=1$ in case $n\in [a_{2r},2\, a_{2r})$ for some $r\in\NN_0;$

\item[$\bullet$] $\de=2$ in case  $n \in [2\, a_{2r}, a_{2r+2})$  for some $r\in\NN_0$.
\end{enumerate}
Since $\ga_3(n) = \Big\lceil \log_{3}\frac{(n+2)(n+1)}{2} \Big\rceil$ by formula~(\ref{eq_ga}), we obtain
$$\De =2 \lfloor \log_{3}n \rfloor + \de -\Big\lceil \log_{3}\frac{(n+2)(n+1)}{2} \Big\rceil. $$%

For $2\leq n\leq 8$ by straightforward calculations, we can see that $\De=0$.

Assume $n\geq 9$. Then $a_4\leq n$ and inequalities~(\ref{claim3}) imply that
$$a_{2r}<b_{2r}<a_{2r+1}<2\, a_{2r}<b_{2r+1}< a_{2r+2}$$
in case $n \in [a_{2r}, a_{2r+2})$ for some $r\in\NN$. Note that here we have $r\geq2$.

Using the properties of ceiling functions and  Lemma \ref{lemma1}, we obtain
$$\De = \lfloor 2 \log_{3}n \rfloor + \Big\lfloor -\log_{3}\frac{(n+2)(n+1)}{2} \Big\rfloor+ \alpha + \de.$$%
Hence, Lemma \Ref{Lemma2} together with the fact that $\Big\lfloor \log_{3} \frac{2}{1+\frac{3}{n}+\frac{2}{n^2}} \Big\rfloor=0$ in case $n\geq4$ implies  
$$\De =  \al + \be+ \de,$$
where $\al$ and $\be$ are the same as in Lemmas \ref{lemma1} and \ref{Lemma2}, respectively.  
We complete the proof case-by-case consideration. Namely,
\begin{enumerate}
    \item[$\bullet$]  for $n\in [a_{2r},b_{2r})$ we have $\al + \be+ \de = 0+0+1=1;$
    \item[$\bullet$]  for $n\in [b_{2r},a_{2r+1} )$ we have $\al + \be+ \de = 0-1+1=0;$
    \item[$\bullet$]  for $n\in [a_{2r+1}, 2\, a_{2r})$ we have $\al + \be+ \de = -1+0+1=0;$
    \item[$\bullet$]  for $n\in [2\, a_{2r}, b_{2r+1})$ we have $\al + \be+ \de = -1+0+2=1;$
    \item[$\bullet$]  for $n\in [b_{2r+1}, a_{2r+2})$ we have $\al + \be+ \de = -1-1+2=0$.
\end{enumerate}
\end{proof}

\section{The general case}\label{section_general}

\begin{theorem}\label{theo_min_sep} 
The set $S(n)$ is a minimal separating set for $\FF_q[V]^{\Sym_n}$ having the least possible number of elements if and only if $n< x_0 $, where $x_0=x_0(q)\in\RR_{\geq 1}$ is the unique solution of the following equation
$$
q^{x-1} = (x+1)\Big(\frac{x}{2}+1\Big)\cdot \ldots \cdot \Big(\frac{x}{q-1}+1\Big)
$$
over $\RR_{\geq 1}=[1,+\infty)$. 
Moreover, 
\begin{enumerate}
\item[$\bullet$] $x_0>1;$

\item[$\bullet$] $x_0 < q$ in case $q>3$.
\end{enumerate}
\end{theorem}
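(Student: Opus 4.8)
The plan is to reduce the statement to the fact that the graphs of $g(x)=(x+1)\big(\frac{x}{2}+1\big)\cdots\big(\frac{x}{q-1}+1\big)$ and $h(x)=q^{x-1}$ cross exactly once on $\RR_{\geq1}$, with $g>h$ to the left of the crossing point. For the reduction, note that $S(n)$ generates $\FF_q[V]^{\Sym_n}$, hence is separating, and has $n$ elements, so $\ga_q(n)\leq n$ always. Consequently $S(n)$ is simultaneously a minimal separating set and one of least possible cardinality precisely when $\ga_q(n)=n$, which — since $\ga_q(n)\leq n$ in any case — is equivalent to $\ga_q(n)\geq n$. By formula~\Ref{eq_ga} together with the elementary equivalence $\lceil L\rceil\geq n\iff L>n-1$ (for $L\in\RR$, $n\in\ZZ$), followed by exponentiation with base $q$, the inequality $\ga_q(n)\geq n$ becomes $\frac{(n+1)(n+2)\cdots(n+q-1)}{(q-1)!}>q^{n-1}$; writing $\frac{n+k}{k}=\frac{n}{k}+1$ identifies the left-hand side as $g(n)$. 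Thus everything reduces to proving $g(n)>h(n)\iff n<x_0$, and to locating $x_0$.

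Next I would analyze $\phi(x):=\ln g(x)-\ln h(x)=\sum_{k=1}^{q-1}\ln\big(\frac{x}{k}+1\big)-(x-1)\ln q$ on $[1,\infty)$. Telescoping gives $\phi(1)=\sum_{k=1}^{q-1}\ln\frac{k+1}{k}=\ln q>0$; differentiating twice gives $\phi''(x)=-\sum_{k=1}^{q-1}(x+k)^{-2}<0$, so $\phi$ is strictly concave; and $\phi(x)\to-\infty$ since $\ln g(x)\sim(q-1)\ln x$ is dominated by $(x-1)\ln q$. A strictly concave function on $[1,\infty)$ that is positive at $x=1$ and tends to $-\infty$ has exactly one zero $x_0$; since $\phi(1)\neq0$ we get $x_0>1$, and concavity forces $\phi>0$ on $[1,x_0)$ and $\phi<0$ on $(x_0,\infty)$. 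The equation $\phi(x_0)=0$ is exactly the displayed equation $q^{x-1}=g(x)$, so $x_0$ is its unique solution over $\RR_{\geq1}$. Finally, for integer $n\geq1$ we have $g(n)>h(n)\iff\phi(n)>0\iff n<x_0$, which combined with the reduction step yields the asserted equivalence and the claim $x_0>1$.

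For the bound $x_0<q$ when $q>3$: since $\phi>0$ on $[1,x_0)$ and $\phi(q)\neq0$, it suffices to show $\phi(q)<0$. One computes $g(q)=\prod_{k=1}^{q-1}\frac{q+k}{k}=\frac{(2q-1)!}{q!\,(q-1)!}=\binom{2q-1}{q}=\frac12\binom{2q}{q}$, so $\phi(q)<0$ is equivalent to $\binom{2q-1}{q}<q^{q-1}$. A prime power $q>3$ satisfies $q\geq4$: the case $q=4$ is the direct check $\binom{7}{4}=35<64=4^{3}$, and for $q\geq5$ the crude bound $\binom{2q}{q}<4^{q}$ gives $\binom{2q-1}{q}<2\cdot4^{q-1}\leq q^{q-1}$, the last step holding because $(q/4)^{q-1}\geq(5/4)^{4}>2$.

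The reduction is purely formal, and the real content is the concavity argument, whose only input is $\phi(1)=\ln q>0$ together with $\phi''<0$ — equivalently, one could note $\phi'(1)=\sum_{j=2}^{q}\frac1j-\ln q<\int_{1}^{q}\frac{dt}{t}-\ln q=0$, so $\phi$ is already decreasing at the left endpoint and stays so, which also gives uniqueness of $x_0$. I expect the least elegant part to be the explicit estimate $\binom{2q-1}{q}<q^{q-1}$ for $q\geq4$; it is not deep, reducing to a single numerical check plus a one-line exponential comparison, but it is the one place where one cannot avoid an honest inequality.
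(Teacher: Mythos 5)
Your proposal is correct. The reduction is the same as the paper's: both arguments boil the statement down to $n=\ga_q(n)$, equivalently $q^{n-1}<(n+1)\big(\tfrac n2+1\big)\cdots\big(\tfrac n{q-1}+1\big)$ (you via $\lceil L\rceil\ge n\iff L>n-1$, the paper via a floor-function manipulation), and then to locating the unique crossing of $(x-1)\ln q$ and $\sum_{i=1}^{q-1}\ln\big(\tfrac xi+1\big)$ on $\RR_{\geq1}$. Where you differ is in how the single crossing is established: the paper proves the difference is strictly monotone by bounding the derivative with the harmonic-sum estimate $\tfrac12+\cdots+\tfrac1q<\ln q$, and then gets existence of the crossing by exhibiting explicit points where the inequality reverses, with a case split ($a=4$ for $q=2,3$; $a=q$ for $q>3$ via the termwise comparisons $q^2>(q+1)(q+2)/2$ and $q>\tfrac qi+1$). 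You instead use strict concavity of $\phi(x)=\sum_{k=1}^{q-1}\ln\big(\tfrac xk+1\big)-(x-1)\ln q$ together with $\phi(1)=\ln q>0$ and $\phi(x)\to-\infty$, which yields existence, uniqueness, and the sign pattern $\phi>0$ on $[1,x_0)$, $\phi<0$ on $(x_0,\infty)$ uniformly in $q$, with no case analysis; your closing remark ($\phi'(1)<0$ plus concavity) essentially recovers the paper's monotonicity argument. For the bound $x_0<q$ when $q>3$ you need a separate estimate, $g(q)=\binom{2q-1}{q}<q^{q-1}$ via $\binom{2q}{q}<4^q$ plus the check at $q=4$, where the paper's factorwise comparison at $x=q$ is a bit leaner because it serves double duty (existence and the bound at once). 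Both routes are sound and elementary; yours buys a cleaner, case-free existence/uniqueness step at the cost of a slightly heavier explicit inequality for $x_0<q$.
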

\begin{proof}
Since $\#S(n)= n$ and $\ga = \ga_q(n) = \Big\lceil \log_{q} \frac{(n+q-1)\cdot \, \ldots \, \cdot (n+1)}{(q-1)!} \Big\rceil$ is the least possible number of elements of a separating set for $\FF_q[V]^{\Sym_n}$ by formula~(\ref{eq_ga}), using the properties of the floor and ceiling functions we obtain
$$
\#S(n)-\gamma = \Bigg\lfloor \log_{q} \frac{(q-1)!\cdot q^n}{(n+q-1)\cdot \, \ldots \, \cdot (n+1)}  \Bigg\rfloor.
$$%
Hence,

\begin{eq}\label{eq_new11}
\#S(n)=\gamma \; \text{ if and only if } \; \frac{(q-1)!\cdot q^n}{(n+q-1)\cdot \, \ldots \, \cdot (n+1)}<q.
\end{eq}

\noindent{}Therefore,  
$$\#S(n)=\gamma   \;\text {if and only if }\; q^{n-1} < (n+1)\Big(\frac{n}{2}+1\Big)\cdot \ldots \cdot \Big(\frac{n}{q-1}+1\Big). $$

\noindent{}Applying $\ln$ to both sides, we obtain that 
$$\#S(n)=\gamma   \;\text { if and only if }\; f_1(n)<f_2(n), $$%
where $f_1(x)=(x-1)\ln q$ and $f_2(x)= \displaystyle\sum_{i=1}^{q-1} \ln\Big(\frac{x}{i}+1\Big)$.

Assume $x\in\RR_{\geq 1}$. Since $f'_{1}(x)=\ln q$ and  $f'_{2}(x)=\displaystyle\sum_{i=1}^{q-1} \frac{1}{x+i}$, we obtain 
\begin{eq}\label{eq_der}
f_{1}'(x)>f_{2}'(x), 
\end{eq}%
where we use inequality $f_{2}'(1)\geq f_{2}'(x)$ and the well-known upper bound on a partial sum $f_{2}'(1)$ of the harmonic series:
$$f_{2}'(1)=\frac{1}{2} + \cdots + \frac{1}{q} < \ln q. $$%
Functions $f_1(x)$ and $f_2(x)$ are strictly increasing over $\RR_{\geq 1}$ and $f_1(1)<f_2(1)$. We claim that 
\begin{eq}\label{eq_claim_f1f2}
f_1(a)> f_2(a) \;\text{ for some } a>1.
\end{eq}
To prove the claim, we consider the following three cases.
\begin{enumerate}
\item[$\bullet$] If $q=2$, then $f_2(x)=\ln(x+1)$ and $f_1(4)>f_2(4)$. 

\item[$\bullet$] If $q=3$, then $f_2(x)=\ln(1+x)+\ln(1+x/2)$ and $f_1(4)>f_2(4)$.

\item[$\bullet$] Assume $q>3$. Then $f_1(q) = \ln q^2 + (q-3) \ln q$ and \[f_2(q)=\ln \frac{(q+1)(q+2)}{2} +  \displaystyle\sum_{i=3}^{q-1} \ln\Big(\frac{q}{i}+1\Big).\] Since $q^2 > (q+1)(q+2)/2$ and $q>(q+i) /i$ for $i\geq3$, we obtain that $f_1(q)>f_2(q)$.
\end{enumerate}

Claim~(\ref{eq_claim_f1f2}) together with inequality $f_1(1)<f_2(1)$ implies that $f_1(x_0)=f_2(x_0)$ for some $x_0\in \RR_{\geq 1}$ with $1<x_0<a$. Inequality~(\ref{eq_der}) implies that $x_0=x_0(q)$ is the unique solution of the equation  $f_1(x)=f_2(x)$ over  $\RR_{\geq 1}$. Moreover, we can see that for $x\in \RR_{\geq 1}$ we have $f_1(x)<f_2(x)$ if and only if $x<x_0$.  Obviously, $x_0$ is also the unique solution of the equation
$$q^{x-1} = (x+1)\Big(\frac{x}{2}+1\Big)\cdot \ldots \cdot \Big(\frac{x}{q-1}+1\Big)$$
over $\RR_{\geq 1}$. 

In case $q>3$ we have $f_1(q)>f_2(q)$ and we may take $a=q;$  hence $x_0<q$.  The requirements is proven.
\end{proof}

Let us remark that the following lemma which is an easy corollary of~\cite[Theorem 1.1]{Kemper_Lopatin_Reimers_2022} describes when $S(n)$ is  a minimal separating set having the least possible number of elements, but for our purposes, we need more explicit condition on $n$. 

\begin{lemma}\label{lemma_referee}
The set $S(n)$ is a minimal separating set for $\FF_q[V]^{\Sym_n}$ having the least possible number of elements if and only if
$$q^{n-1}<\binom{n+q-1}{n}.$$
\end{lemma}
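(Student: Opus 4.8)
The plan is to reduce Lemma~\ref{lemma_referee} to the characterization already contained in~\cite[Theorem 1.1]{Kemper_Lopatin_Reimers_2022}, together with the counting fact recalled in the Auxiliaries subsection. Recall that the number of $\Sym_n$-orbits on $V=\FF_q^n$ equals the number of multisets of size $n$ drawn from $q$ values, which is $\binom{n+q-1}{q-1}=\binom{n+q-1}{n}$. By~\cite[Theorem 1.1]{Kemper_Lopatin_Reimers_2022}, a set $S$ of $\Sym_n$-invariants is a separating set precisely when the induced map $V/\Sym_n \to \FF_q^{|S|}$, $v\mapsto (f(v))_{f\in S}$, is injective, and consequently any separating set has cardinality at least $\bigl\lceil \log_q (\text{number of orbits}) \bigr\rceil = \ga_q(n)$ as in~(\ref{eq_ga}); moreover this bound is attained, so $\ga_q(n)$ is exactly the least possible size of a separating set. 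This is already the content of the displayed formula~(\ref{eq_ga}).

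First I would note that $S(n)$ is always a separating set (indeed a generating set) for $\FF_q[V]^{\Sym_n}$, and it has exactly $\#S(n)=n$ elements. Therefore $S(n)$ is a minimal separating set having the least possible number of elements if and only if its cardinality $n$ equals the minimum $\ga_q(n)$; note that once $n=\ga_q(n)$ the separating set $S(n)$ is automatically minimal with respect to inclusion, since no separating set can have fewer than $\ga_q(n)=n$ elements. Thus the claim is equivalent to the numerical identity $n=\ga_q(n)$.

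Next I would unwind the ceiling. By~(\ref{eq_ga}), $\ga_q(n)=\bigl\lceil \log_q\binom{n+q-1}{n}\bigr\rceil$. Since $\binom{n+q-1}{n}\geq 1$, we always have $\ga_q(n)\geq 1$; and since $S(n)$ generates, we always have $n\geq\ga_q(n)$ (a generating set is separating). Hence $n=\ga_q(n)$ holds if and only if $\ga_q(n)>n-1$, i.e. if and only if $\log_q\binom{n+q-1}{n}>n-1$, which is exactly $q^{n-1}<\binom{n+q-1}{n}$. Alternatively, and perhaps more cleanly, I would invoke the reformulation already carried out inside the proof of Theorem~\ref{theo_min_sep}: there it is shown that $\#S(n)=\ga$ if and only if $\frac{(q-1)!\cdot q^n}{(n+q-1)\cdots(n+1)}<q$, and multiplying through by $(n+q-1)\cdots(n+1)/\bigl(q\,(q-1)!\bigr)=\binom{n+q-1}{n}/q^{?}$—more precisely, rearranging—gives exactly $q^{n-1}<\binom{n+q-1}{n}$. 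So the two formulations agree termwise.

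I do not expect a genuine obstacle here; the lemma is essentially a bookkeeping restatement of~\cite[Theorem 1.1]{Kemper_Lopatin_Reimers_2022} combined with the orbit count $\binom{n+q-1}{n}$ and the elementary inequality $\lceil y\rceil > n-1 \iff y > n-1$ for the specific $y=\log_q\binom{n+q-1}{n}$. The only point requiring a word of care is the equivalence ``least possible number of elements'' $\Leftrightarrow$ ``cardinality equals $\ga_q(n)$'' $\Leftrightarrow$ ``minimal among separating sets of least size'': one should remark explicitly that attaining the lower bound $\ga_q(n)$ forces minimality with respect to inclusion, so the two notions coincide exactly when $n=\ga_q(n)$, which is precisely the inequality in the statement.
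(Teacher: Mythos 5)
Your proposal is correct and follows essentially the same route as the paper: the paper's proof is just the observation that the lemma follows from equivalence~(\ref{eq_new11}), which is exactly the rearrangement you perform (equating $\#S(n)=n$ with $\ga_q(n)$ from formula~(\ref{eq_ga}) and unwinding the ceiling to get $q^{n-1}<\binom{n+q-1}{n}$). Your explicit remark that attaining the lower bound $\ga_q(n)$ forces minimality with respect to inclusion is a harmless elaboration of what the paper leaves implicit.
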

\begin{proof} It follows from equivalence~(\ref{eq_new11}).
\end{proof}

Given $x_0=x_0(q)$ from the formulation of Theorem~\ref{theo_min_sep}, define $\chi_q\in \NN$ as follows:
\begin{eq}\label{eq_zeta}
\chi_q=
\left\{
\begin{array}{cc}
x_0-1, & \text{ if } x_0\in \NN \\
\lfloor x_0\rfloor, & \text{ if }x_0\not\in\NN \\
\end{array}
\right.
\end{eq}%
Note that $\chi_q$ is defined for an arbitrary integer $q\geq2$, not only for the power of a prime. Theorem~\ref{theo_min_sep} implies the following corollary.

\begin{corollary}\label{corArt}
The set $S(n)$ is a minimal separating set for $\FF_q[V]^{\Sym_n}$ having the least possible number of elements if and only if $n\leq \chi_q$. Moreover, 
$$1\leq \chi_q < q \text{ in case }q>3.$$
\end{corollary}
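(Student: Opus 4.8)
The plan is to derive Corollary~\ref{corArt} directly from Theorem~\ref{theo_min_sep} together with the definition~(\ref{eq_zeta}) of $\chi_q$. Theorem~\ref{theo_min_sep} tells us that $S(n)$ is a minimal separating set of least possible cardinality precisely when $n<x_0$, where $x_0=x_0(q)\in\RR_{\geq1}$. Since $n$ ranges over integers, the condition $n<x_0$ is equivalent to $n\leq\lceil x_0\rceil-1$, and it remains only to check that $\lceil x_0\rceil-1=\chi_q$ in both cases of~(\ref{eq_zeta}). If $x_0\notin\NN$, then $\lceil x_0\rceil-1=\lfloor x_0\rfloor=\chi_q$ and the integers $n$ with $n<x_0$ are exactly those with $n\leq\lfloor x_0\rfloor$. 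If $x_0\in\NN$, then $n<x_0$ for integer $n$ is the same as $n\leq x_0-1=\chi_q$. This establishes the "if and only if $n\leq\chi_q$" claim.

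For the inequalities, I would invoke the two bulleted assertions already proved in Theorem~\ref{theo_min_sep}: $x_0>1$ always, and $x_0<q$ when $q>3$. The lower bound $\chi_q\geq1$ follows because $x_0>1$ forces $\chi_q\geq1$ in either branch of~(\ref{eq_zeta}): if $x_0\in\NN$ then $x_0\geq2$ so $\chi_q=x_0-1\geq1$, and if $x_0\notin\NN$ then $x_0>1$ gives $\lfloor x_0\rfloor\geq1$. For the upper bound, suppose $q>3$. If $x_0\notin\NN$, then $\chi_q=\lfloor x_0\rfloor\leq x_0<q$, and since $\chi_q$ is an integer we get $\chi_q<q$. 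If $x_0\in\NN$, then $\chi_q=x_0-1<x_0<q$ as well. In both cases $\chi_q<q$, completing the proof of $1\leq\chi_q<q$ for $q>3$.

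I do not expect a serious obstacle here, since all the analytic content — the existence and uniqueness of $x_0$, and the bounds $1<x_0$ and $x_0<q$ for $q>3$ — has already been handled inside the proof of Theorem~\ref{theo_min_sep}. The only point requiring a moment's care is the interplay between the strict inequality $n<x_0$ of the theorem and the non-strict inequality $n\leq\chi_q$ of the corollary: one must treat separately whether or not $x_0$ is itself an integer, which is exactly what the case split in~(\ref{eq_zeta}) is designed to absorb. Once that bookkeeping is done the corollary is immediate.
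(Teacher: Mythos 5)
Your proposal is correct and follows exactly the route the paper intends: the corollary is stated as an immediate consequence of Theorem~\ref{theo_min_sep}, and your translation of the strict bound $n<x_0$ into $n\leq\chi_q$ via the two cases of~(\ref{eq_zeta}), together with deducing $1\leq\chi_q<q$ (for $q>3$) from $x_0>1$ and $x_0<q$, is precisely the intended bookkeeping. No gaps.
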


\begin{definition}\label{rem1}
By straightforward calculations, using a computer, we can see that
\begin{enumerate}
\item[$\bullet$] $\chi_2 = 2;$

\item[$\bullet$] $\chi_q = 3$ for $3\leq q \leq 17;$

\item[$\bullet$] $\chi_q = 4$ for $18\leq q \leq 109;$

\item[$\bullet$] $\chi_q = 5$ for $110 \leq q \leq 704;$

\item[$\bullet$] $\chi_q = 6$ for $705 \leq q \leq 5018;$

\item[$\bullet$] $\chi_q = 7$ for $5019\leq q \leq 10^4$.
\end{enumerate}
\end{definition}

%

To prove a lower bound on~$\chi_q$ from Theorem~\ref{theo2} (see below) we need the following technical lemma.

\begin{lemma}\label{lemma_tecnical} For every $q \geq e^{e^{2}}$ we have 
$$ \ln q  - (2\ln(\ln q ) + 1) \ln(\ln(\ln q)) > 0.$$
\end{lemma}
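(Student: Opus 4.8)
The plan is to substitute $t = \ln(\ln q)$ and prove the equivalent inequality $e^{t} > (2t+1)\ln t$ for all $t \ge 2$ (since $q \ge e^{e^2}$ means $\ln(\ln q) \ge 2$). This reduces a three-fold-logarithm inequality to a single-variable statement that is amenable to elementary calculus. First I would set $g(t) = e^{t} - (2t+1)\ln t$ and check the base point: at $t = 2$ we have $e^{2} \approx 7.389$ while $(2\cdot 2 + 1)\ln 2 = 5\ln 2 \approx 3.466$, so $g(2) > 0$. Then it suffices to show $g$ is increasing on $[2,\infty)$, i.e. $g'(t) > 0$ there.

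Computing, $g'(t) = e^{t} - 2\ln t - (2t+1)/t = e^{t} - 2\ln t - 2 - 1/t$. On $[2,\infty)$ one has $e^{t} \ge e^{2} > 7$, while $2\ln t + 2 + 1/t$ grows only logarithmically; the cleanest route is to note $e^{t} \ge e^{2} \cdot e^{t-2} \ge e^{2}(1 + (t-2)) = e^{2}(t-1)$ by convexity, and then verify $e^{2}(t-1) - 2\ln t - 2 - 1/t > 0$ for $t \ge 2$ — at $t=2$ this is $e^{2} - 2\ln 2 - 2 - 1/2 \approx 7.389 - 1.386 - 2.5 > 0$, and the left side is increasing since its derivative $e^{2} - 2/t + 1/t^{2}$ is clearly positive. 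Hence $g'(t) > 0$ on $[2,\infty)$, so $g(t) \ge g(2) > 0$, which is the claim.

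The only mild obstacle is bookkeeping the constants correctly at the base point $t = 2$ and making sure the convexity bound $e^{t-2} \ge t-1$ is invoked with the right direction of inequality; everything else is routine monotonicity. One could alternatively avoid the substitution and work directly with $q$, but introducing $t = \ln(\ln q)$ makes the structure transparent and keeps the estimates short. An even slacker bound suffices for the application in Theorem~\ref{theo2}, so I would not try to optimize the threshold $e^{e^{2}}$.
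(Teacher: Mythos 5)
Your proof is correct and follows essentially the same strategy as the paper: reduce to a one-variable inequality by a logarithmic substitution, check the value at the left endpoint (both arguments boil down to $e^{2}-5\ln 2>0$), and establish monotonicity by a derivative estimate. The only difference is cosmetic: you substitute $t=\ln(\ln q)$ and control $g'(t)$ via the tangent-line bound $e^{t}\ge e^{2}(t-1)$, whereas the paper substitutes $x=\ln q$ and runs a second round of monotonicity on the numerator $w(x)$ of $h'(x)$; both are routine and valid.
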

\begin{proof}
Assume $x\in\RR_{\geq e}$. Then for 
$$
h(x)= x - (2\ln x+1) \ln(\ln x).
$$
we have 
$$h'(x)=\frac{w(x)}{x\ln x}, \;\text{ where }\; w(x)=x\ln x-2\ln x-2\ln(\ln x) \ln x - 1.
$$
Since 
$$w'(x)=\frac{(x\ln x-2\ln(\ln x)) + (x - 4)}{x}>0,
$$ 
for all $x\ge e^2$ and $w(e^2)=2 e^2 - 4 \ln 2 - 5>0$, we obtain that $w(x)>0$ for all $x\ge e^2$. Therefore, $h'(x)>0$ for all $x\ge e^2$. Hence, the inequality $h(e^2)=e^2 - 5\ln 2>0$ implies that $h(x)>0$ for all $x\ge e^2$. In particular, $h(\ln q)>0$ for all $q\ge e^{e^2}$. The required statement is proven.
\end{proof}

\begin{theorem}\label{theo2}
    We have $\chi_{q} \ge \lfloor \ln (\ln q) \rfloor$.
\end{theorem}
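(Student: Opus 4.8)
The goal is to show $\chi_q \ge \lfloor \ln(\ln q) \rfloor$. By Corollary~\ref{corArt} (equivalently, by the definition of $\chi_q$ together with Theorem~\ref{theo_min_sep}), it suffices to show that $n = \lfloor \ln(\ln q)\rfloor$ satisfies the condition that $S(n)$ is a minimal separating set having the least number of elements, i.e., by Lemma~\ref{lemma_referee}, that $q^{n-1} < \binom{n+q-1}{n}$; equivalently, recalling the reformulation obtained in the proof of Theorem~\ref{theo_min_sep}, that $f_1(n) < f_2(n)$ where $f_1(x) = (x-1)\ln q$ and $f_2(x) = \sum_{i=1}^{q-1}\ln(1 + x/i)$. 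Since $f_1(x) < f_2(x)$ exactly when $x < x_0$, and $\chi_q = \lfloor x_0 \rfloor$ or $x_0 - 1$, proving $f_1(n) < f_2(n)$ for $n = \lfloor \ln(\ln q) \rfloor$ gives $n \le \chi_q$. One should first dispose of small $q$ (those with $\ln(\ln q) < 1$, and a finite range up to $e^{e^2}$) by hand or by the table in Definition~\ref{rem1}, so that we may assume $q \ge e^{e^2}$, which is exactly the hypothesis of Lemma~\ref{lemma_tecnical}.

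The plan is then to lower-bound $f_2$ and upper-bound $f_1$ at $x = n := \lfloor\ln(\ln q)\rfloor$. For the lower bound on $f_2(n) = \sum_{i=1}^{q-1}\ln(1 + n/i)$, I would keep only the terms with small $i$, or better, compare the sum to an integral: $\sum_{i=1}^{q-1}\ln(1+n/i) \ge \int_1^{q}\ln(1 + n/t)\,dt$, which evaluates in closed form and behaves like $n\ln q$ minus lower-order terms. More crudely, using $\ln(1+n/i) \ge (n/i)\cdot\frac{\ln 2}{?}$ is too lossy; instead the clean route is $f_2(n) \ge n\sum_{i=1}^{q-1}\ln(1 + 1/i)\cdot(\text{concavity correction})$ — actually the simplest usable bound is $\ln(1 + n/i) \ge n\ln(1+1/i)$ is false, so I would instead just use $f_2(n) \ge \sum_{i=1}^{n}\ln(1+n/i) \ge \sum_{i=1}^n \ln(n/i) = \ln(n^n/n!) \ge n\ln n - n$ together with the tail, or simply bound $f_2(n) \ge \ln\binom{n+q-1}{n} \ge n\ln\big(\frac{q-1}{n}\big)$. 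Meanwhile $f_1(n) = (n-1)\ln q < n\ln q$. So it suffices to arrange $n\ln q < f_2(n)$ with a comfortable margin; writing $n \approx \ln(\ln q)$, the inequality $n\ln q < f_2(n)$ should reduce, after the integral estimate of $f_2$, precisely to an inequality of the shape handled by Lemma~\ref{lemma_tecnical}, namely $\ln q > (2\ln(\ln q) + 1)\ln(\ln(\ln q))$.

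Concretely, I expect the chain to be: from $f_2(n) \ge \int_1^q \ln(1+n/t)\,dt = (q+n)\ln(q+n) - q\ln q - (1+n)\ln(1+n) + \dots$ one extracts $f_2(n) \ge n\ln q - n\ln n - O(n)$ (the dominant $n\ln q$ coming from the $\ln(1+n/t)$ integrand near $t$ small and the $\ln$ expansion); then $f_2(n) - f_1(n) \ge \ln q - n\ln n - O(n)$, and substituting $n = \lfloor\ln(\ln q)\rfloor \le \ln(\ln q)$ gives $f_2(n) - f_1(n) \ge \ln q - \ln(\ln q)\ln(\ln(\ln q)) - O(\ln(\ln q))$, which is positive for $q \ge e^{e^2}$ by Lemma~\ref{lemma_tecnical} (the lemma is stated with the constant $2\ln(\ln q)+1$ precisely to absorb the $O(\ln(\ln q))$ error terms). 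The main obstacle is bookkeeping: getting an explicit, rigorous lower bound on $f_2(\lfloor\ln(\ln q)\rfloor)$ whose error terms are dominated by the slack $(2\ln(\ln q)+1)\ln(\ln(\ln q))$ in Lemma~\ref{lemma_tecnical} — i.e., choosing the estimate for $f_2$ loose enough to be elementary yet tight enough that the $n\ln q$ main term survives with all corrections provably below that threshold. Everything else (monotonicity of $f_1 - f_2$, reduction to Lemma~\ref{lemma_referee}, handling $q < e^{e^2}$ via Definition~\ref{rem1}) is routine.
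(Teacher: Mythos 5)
Your overall skeleton is the same as the paper's: dispose of $q<e^{e^2}$ trivially (there $\ln\ln q<2$, so $\lfloor\ln(\ln q)\rfloor\le 1\le\chi_q$ because $x_0>1$; no table is needed), and for $q\ge e^{e^2}$ show $f_1(b)<f_2(b)$ at $b=\lfloor\ln(\ln q)\rfloor\ge 2$, which forces $b<x_0$ and hence $b\le\chi_q$, with Lemma~\ref{lemma_tecnical} supplying the decisive inequality. Where you diverge is in how $f_2(b)=\ln\binom{b+q-1}{b}$ is bounded from below: the paper runs explicit Stirling-type bounds $\sqrt{2\pi k}(k/e)^k<k!<2\sqrt{\pi k}(k/e)^k$ on the three factorials, while you hedge between an integral comparison and elementary binomial bounds and explicitly leave the ``bookkeeping'' open. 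As written this is a genuine gap: with an unquantified $O(n)$ error the appeal to Lemma~\ref{lemma_tecnical} is not automatic, since at the boundary $q=e^{e^2}$ the slack of the lemma beyond the main term $b\ln b\le\ln(\ln q)\,\ln(\ln(\ln q))$ is only about $(\ln(\ln q)+1)\ln(\ln(\ln q))\approx 2$, so the implied constant in your $O(n)=O(\ln\ln q)$ must be below roughly $\ln(\ln(\ln q))\approx 0.69$; one cannot simply say the constant $2\ln(\ln q)+1$ ``absorbs'' unspecified error terms.

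That said, one of the bounds you yourself list closes the argument at once and is simpler than the paper's route. Since $\binom{m}{k}\ge (m/k)^k$, you get $f_2(b)=\ln\binom{b+q-1}{b}\ge b\ln\frac{b+q-1}{b}\ge b\ln\frac{q}{b}=b\ln q-b\ln b$, hence
$f_2(b)-f_1(b)\ge \ln q-b\ln b\ge \ln q-\ln(\ln q)\,\ln(\ln(\ln q))>0$
by Lemma~\ref{lemma_tecnical}, with room to spare. So replace the integral estimate and the scattershot alternatives by this two-line computation (together with the reduction via Lemma~\ref{lemma_referee}/Theorem~\ref{theo_min_sep} you already have), and your proof is complete; it reaches the same conclusion as the paper while avoiding the Stirling estimates and the auxiliary manipulation of the $\frac12(\ln a-2b\ln b-\ln b)$ term that the paper needs.
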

\begin{proof}
If $q < e^{e^2}$, then $\lfloor \ln (\ln q) \rfloor \le 1 \leq\chi_{q},$
and the required statement is proven.

Assume that $q \geq e^{e^2}$. Define \[f(x) = q^{x-1}, \quad g(x) = \frac{(x+1)\cdot \ldots \cdot (x+q-1)}{(q-1)!}\] for $x\in\RR_{\geq1}$.  Recall that $x_0=x_0(q)$ from definition~(\ref{eq_zeta}) of $\chi_{q}$ is the unique solution of the equation $f(x)=g(x)$ over $\RR_{\geq1}$ and $1=f(1)<g(1)=q$. Hence, to prove the theorem is sufficient to show that
\begin{eq}\label{eq_fg}
f(b) < g(b)
\end{eq}%
for $b =\lfloor \ln (\ln q )\rfloor\geq2$, since inequality~(\ref{eq_fg}) implies that $b<x_0$. Inequality~(\ref{eq_fg}) is equivalent to the inequality $\ln (f(b)) < \ln (g(b))$. 

For short, define $a = b + q - 1\geq q+1$. Then
$$
g(b) = \binom{a}{q-1} \;\;\text{ and }\;\;\; \ln g(b) = \ln a! - \ln b! - \ln (q-1)!
$$%
Using well-known inequalities
$$
\sqrt{2 \pi k} \Big( \frac{k}{e} \Big)^{k} < k! < 2 \sqrt{\pi k} \Big( \frac{k}{e} \Big)^{k} \quad \text{ for all }\; k \geq 1,
$$
we obtain that 
\begin{eq}\label{eq_gb}
\ln g(b) > a\ln(a) - \biggl(q-\frac{1}{2}\biggl)\ln(q-1) 
+\frac{1}{2}\biggl(\ln(a) - 2b\ln(b) - \ln(b)\biggl) -\frac{1}{2}\ln(8\pi).
\end{eq}

By the definition of $b$, we have $2\leq b \le \ln(\ln q)$. Therefore, 
$$
\ln(a) - 2b\ln(b) - \ln(b) \geq \ln(q) - (2\ln(\ln q) + 1) \ln(\ln(\ln q))>0
$$
by Lemma~\ref{lemma_tecnical}. Thus inequality~(\ref{eq_gb}) implies that 
$$
\ln{g(b)}> a\ln(a)-\biggl(q-\frac{1}{2}\biggl)\ln(q-1) -\frac{1}{2}\ln(8\pi).
$$
Applying inequality $a\geq q+1$, we obtain
\[
\begin{aligned}
    \ln g(b) &> (q + b - 1)\ln(q + 1) 
    - \biggl(q - \frac{1}{2}\biggr)\ln(q - 1) 
    - \frac{1}{2}\ln(8\pi)= \\
    &= (b - 1)\ln(q + 1) 
    + \biggl(q - \frac{1}{2}\biggr)\biggl(\ln(q + 1) - \ln(q - 1)\biggr)+ \\
    &\quad + \frac{1}{2}\biggl(\ln(q + 1) - \ln(8\pi)\biggr) > (b - 1)\ln q = \ln f(b).
\end{aligned}
\]

The required statement is proven.
\end{proof}

\begin{corollary}\label{cor2}
We have $\lim_{q\to \infty}\chi_q = +\infty$.
\end{corollary}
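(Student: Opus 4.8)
The plan is to obtain this as an immediate consequence of Theorem~\ref{theo2}. Recall that Theorem~\ref{theo2} gives the bound $\chi_q \ge \lfloor \ln(\ln q)\rfloor$ for every integer $q \ge 2$. Since the map $q \mapsto \ln(\ln q)$ is strictly increasing on $(1,+\infty)$ and unbounded above (the logarithm is unbounded increasing, and composing such a function with $\ln$ again keeps it unbounded increasing), the integer part $\lfloor \ln(\ln q)\rfloor$ tends to $+\infty$ as $q \to +\infty$. Concretely, given any $N \in \NN$, as soon as $q \ge e^{e^{N}}$ we have $\ln(\ln q) \ge N$, and hence $\chi_q \ge N$ by Theorem~\ref{theo2}; since $N$ was arbitrary, this shows $\lim_{q\to\infty}\chi_q = +\infty$.

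There is essentially no obstacle here: the entire content is already packaged in Theorem~\ref{theo2} (and, behind it, in the technical estimate of Lemma~\ref{lemma_tecnical}), so the corollary is a one-line deduction. If one preferred to avoid citing Theorem~\ref{theo2}, an alternative route would be to argue directly from the defining equation $q^{x-1} = (x+1)\big(\tfrac{x}{2}+1\big)\cdots\big(\tfrac{x}{q-1}+1\big)$ of $x_0(q)$ from Theorem~\ref{theo_min_sep}: for any fixed value of $x$ the right-hand side grows only polynomially in $q$ while the left-hand side grows exponentially in $q$, which forces the unique crossing point $x_0(q)$ — and hence $\chi_q$ — to grow without bound. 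But this is strictly more work than simply invoking Theorem~\ref{theo2}, so I would take the short path.
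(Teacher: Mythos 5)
Your proof is correct and follows exactly the paper's (implicit) argument: the corollary is deduced directly from the bound $\chi_q \ge \lfloor \ln(\ln q)\rfloor$ of Theorem~\ref{theo2}, since $\ln(\ln q)\to+\infty$ as $q\to\infty$. Nothing further is needed.
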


\begin{corollary}\label{cor3} For every $n\geq2$ there exists $q$ such that $S(n)$ is a minimal separating set for $\FF_q[V]^{\Sym_n}$ having the least possible number of elements.
\end{corollary}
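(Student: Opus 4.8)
The plan is to deduce Corollary~\ref{cor3} directly from Corollary~\ref{corArt} together with the lower bound $\chi_q\ge \lfloor \ln(\ln q)\rfloor$ of Theorem~\ref{theo2}. By Corollary~\ref{corArt}, for a fixed $n\geq2$ the set $S(n)$ is a minimal separating set for $\FF_q[V]^{\Sym_n}$ having the least possible number of elements precisely when $n\leq \chi_q$; hence it suffices to exhibit a single prime power $q$ with $\chi_q\geq n$. First I would use Theorem~\ref{theo2}: since $\lfloor \ln(\ln q)\rfloor\to\infty$ as $q\to\infty$ (equivalently, this is recorded as Corollary~\ref{cor2}, $\lim_{q\to\infty}\chi_q=+\infty$), there is a real threshold beyond which $\lfloor \ln(\ln q)\rfloor\geq n$, namely any $q\geq e^{e^{n}}$ works.

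The one genuine subtlety is that $q$ must be a prime power, not an arbitrary integer, because $\FF_q$ has to exist. So the second step is to invoke the elementary fact that prime powers are unbounded (indeed primes already are, by Euclid), so one can pick a prime $q$ with $q\geq e^{e^{n}}$; then $\ln(\ln q)\geq n$, so $\lfloor \ln(\ln q)\rfloor\geq n$, and Theorem~\ref{theo2} gives $\chi_q\geq \lfloor \ln(\ln q)\rfloor\geq n$. Applying Corollary~\ref{corArt} to this $q$ finishes the argument.

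I do not anticipate a real obstacle here: the corollary is a soft consequence of the two preceding results, and the only thing to be careful about is the prime-power constraint on $q$, which is handled by the infinitude of primes. If one wished to avoid even citing Theorem~\ref{theo2}, an alternative is to argue directly from Lemma~\ref{lemma_referee}: for fixed $n$, as $q\to\infty$ the right-hand side $\binom{n+q-1}{n}$ grows like $q^{n}/n!$, which eventually exceeds $q^{n-1}$, so the inequality $q^{n-1}<\binom{n+q-1}{n}$ holds for all sufficiently large $q$ and in particular for some prime $q$; but the shortest route is simply to cite Corollary~\ref{corArt} and Theorem~\ref{theo2} as above.
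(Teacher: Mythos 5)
Your proposal is correct and follows the same route the paper intends: Corollary~\ref{cor3} is stated as a direct consequence of Theorem~\ref{theo2} (equivalently Corollary~\ref{cor2}) combined with Corollary~\ref{corArt}, exactly as you argue, and your explicit choice of a prime $q\geq e^{e^{n}}$ (using the infinitude of primes to respect the prime-power constraint) is just a careful spelling-out of that deduction.
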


\subsection*{Acknowledgments} The first author was supported by FAPESP 2021/01690-7. We are grateful for this support.




\EditInfo{October 26, 2024}{January 2, 2025}{Ivan Kaygorodov}

\begin{thebibliography}{10}

\bibitem{Aberth_1964}
O.~Aberth.
\newblock The elementary functions in a finite field of prime order.
\newblock {\em Illinois J. Math.}, 8:132--138, 1964.

\bibitem{Cavalcante_Lopatin_2020}
F.~Cavalcante and A.~Lopatin.
\newblock Separating invariants of three nilpotent $3\times3$ matrices.
\newblock {\em Linear Algebra Appl.}, 607:9--28, 2020.

\bibitem{Domokos_2020_add}
M.~Domokos.
\newblock Addendum to ``{C}haracteristic free description of semi-invariants of
  $2\times2$ matrices''[{J}. {P}ure {A}ppl. {A}lgebra 224 (2020), no. 5,
  106220].
\newblock {\em J. Pure Appl. Algebra}, 224(6):106270, 2020.

\bibitem{Domokos_2020}
M.~Domokos.
\newblock Characteristic free description of semi-invariants of $2\times2$
  matrices.
\newblock {\em J. Pure Appl. Algebra}, 224(5):106220, 2020.

\bibitem{Domokos_Miklosi_2023}
M.~Domokos and B.~Mikl${\rm \acute{o}}$s.
\newblock Symmetric polynomials over finite fields.
\newblock {\em Finite Fields Appl.}, 89:102224, 2023.

\bibitem{Lopatin_Ferreira_2023}
R.~Ferreira and A.~Lopatin.
\newblock Minimal generating and separating sets for ${O}(3)$-invariants of
  several matrices.
\newblock {\em Operators and Matrices}, 17(3):639--651, 2023.

\bibitem{Kaygorodov_Lopatin_Popov_2018}
I.~Kaygorodov, A.~Lopatin, and Y.~Popov.
\newblock Separating invariants for $2\times 2$ matrices.
\newblock {\em Linear Algebra Appl.}, 559:114--124, 2018.

\bibitem{Kemper_Lopatin_Reimers_2022}
G.~Kemper, A.~Lopatin, and F.~Reimers.
\newblock Separating invariants over finite fields.
\newblock {\em J. Pure Appl. Algebra}, 226:106904, 2022.

\bibitem{Lopatin_2024_IJAC}
A.~Lopatin.
\newblock On $m$-tuples of nilpotent $2\times2$ matrices over an arbitrary
  field.
\newblock {\em Int. J. Algebra Comput.}, 34(8):1253--1272, 2024.

\bibitem{Lopatin_Martins_2023}
A.~Lopatin and P.~Muniz~Martins.
\newblock Separating invariants for two-dimensional orthogonal groups over
  finite fields.
\newblock {\em Linear Algebra Appl.}, 692:71--83, 2024.

\bibitem{Lopatin_Reimers_2021}
A.~Lopatin and F.~Reimers.
\newblock Separating invariants for multisymmetric polynomials.
\newblock {\em Proc. Amer. Math. Soc.}, 149:497--508, 2021.

\bibitem{Lopatin_Zubkov_ANT}
A.~Lopatin and A.~Zubkov.
\newblock Separating ${G}_2$-invariants of several octonions.
\newblock {\em Algebra Number Theory}, 18(12):2157--2177, 2024.

\bibitem{Reimers_2020}
F.~Reimers.
\newblock Separating invariants for two copies of the natural ${S}_n$-action.
\newblock {\em Commun. Algebra}, 48:1584--1590, 2020.

\end{thebibliography}
\end{document}